\newcommand{\RR}{{\mathbb R}}
\newcommand{\NN}{{\mathbb N}}
\newcommand{\ZZ}{\mathbb Z}
\def\A{{\mathcal A}}
\def\M{{\mathcal M}}
\def\T{{\mathcal T}}
\def\supp{{\rm supp}}
\def\diam{{\rm diam}}
\def\vol{{\rm vol}}
\def\freq{{\rm freq}}
\def\dist{{\rm dist}}
\numberwithin{equation}{section}
\newtheorem{theo}{Theorem}
\newtheorem{prop}[theo]{Proposition}
\newtheorem{lemma}[theo]{Lemma}
\newtheorem{remark}[theo]{Remark}
\begin{document}

\parindent = 0cm

\title
{Self-similar tiling systems, topological factors and stretching factors}

\author{Mar\'{\i}a Isabel Cortez}
\address{Departamento de Matem\'atica y CC. de la Universidad de Santiago de Chile,
Av. Libertador Bernardo O'Higgins 3363.}
\email{mcortez@usach.cl }

\author{Fabien Durand}
\address{Laboratoire Ami\'enois
de Math\'ematiques Fondamentales et
Appliqu\'ees, CNRS-UMR 6140, Universit\'{e} de Picardie
Jules Verne, 33 rue Saint Leu, 80039 Amiens Cedex, France.}
\email{fabien.durand@u-picardie.fr}

\subjclass{}
\keywords{}

\begin{abstract}
In this paper we prove 
that if two self-similar tiling systems, with respective  stretching factors $\lambda_1$ 
and $\lambda_2$, have a common factor which is a non periodic tiling system, then $\lambda_1$ 
and $\lambda_2$ are multiplicatively dependent.
\end{abstract}

\date{2007, July 10th}
\maketitle \markboth{Maria Isabel Cortez, Fabien Durand}{Self-similar tiling systems, topological factors and expansion constants.}

\section{Introduction}
 
 Given a non periodic self-similar tiling $\T$ generated by some similarity $S_1$ with stretching factor $\lambda_1$,
it is rather natural to ask if we could generate $\T$ using another similarity with a different 
stretching factor $\lambda_2$.
This is of course possible taking a power of the similarity $S_1$, where
$\lambda_2$ is in this  case a power of $\lambda_1$. Holton, Radin and Sadun show in \cite{HRS}
that  the stretching factor of any other similarity 
which generates $\T$   is equal to a rational power of $\lambda_1$. More precisely, they prove that the stretching factors of conjugate 
tiling systems which are the orbit closure under  Euclidean motions  of some self similar tilings are multiplicatively dependent.
In this paper we look at tiling systems  which are the orbit closure
under translations of some self similar tilings, in order to give a necessary condition to have non periodic common factors.
The result we present in this paper is the following:

\begin{theo}
\label{mainresult} Let $S_1(\T_1)=\T_1$ and $S_2(\T_2)=\T_2$ be two
self-similar tilings satisfying the Finite Pattern Condition, where $S_1$ and $S_2$ are primitive substitutions.
Let $\lambda_1$ and $\lambda_2$ be the
Perron eigenvalues of the substitution matrices associated to $S_1$
and $S_2$ respectively. If there exist a   non periodic
tiling $\T$ 
and factors maps $\pi_i:\Omega_{\T_i}\to \Omega_{\T}$,
for $i\in \{1,2\}$, then $\lambda_1$ and $\lambda_2$ are
multiplicatively dependent.
\end{theo}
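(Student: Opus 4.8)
The plan is to extract a numerical invariant from the factor $\T$ that simultaneously constrains both $\lambda_1$ and $\lambda_2$, forcing them into the same multiplicative group. The natural candidate is the module of ``return times'' or, in the tiling picture, the group generated by differences of vectors in the set of patch occurrences — equivalently, a rational-vector-space structure attached to $\T$. Here is the sketch.

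First I would recall that for a self-similar tiling $\T_i$ with primitive substitution $S_i$ and stretching factor $\lambda_i$, the similarity $S_i$ induces a self-homeomorphism of the tiling space $\Omega_{\T_i}$ which multiplies all translation vectors by $\lambda_i$; in particular $\lambda_i$ is an algebraic integer (a Perron number) and the set of relative positions of tiles spans, over $\QQ(\lambda_i)$, a finite-dimensional space that is invariant under multiplication by $\lambda_i$. The key structural fact I would use is that a factor map $\pi_i : \Omega_{\T_i} \to \Omega_{\T}$ between such systems is, up to a translation, induced by a ``local'' rule, so it carries the self-similar structure downstairs in a controlled way: although $\T$ need not itself be self-similar, the image of $S_i$ gives an expansive-type symmetry of $\Omega_{\T}$ whose expansion constant is exactly $\lambda_i$ (here one uses that $\T$ is non periodic, so that $\pi_i$ cannot collapse the translation action and the scaling really does descend).

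Next, from the common factor $\T$ I would build a single ``dilation module'' $M \subset \RR^d$ (or a $\QQ$-vector space $V$), intrinsic to $\Omega_{\T}$: something like the $\QQ$-span of the set of translation vectors between occurrences of a fixed patch, or the group of topological eigenvalue frequencies, or the first \v{C}ech cohomology of $\Omega_{\T}$ tensored with $\QQ$. The point of whichever object is chosen: (a) it is a topological conjugacy invariant of $\Omega_{\T}$, hence determined by $\T$ alone and ``seen'' identically through both $\pi_1$ and $\pi_2$; (b) it is finite-dimensional (using the Finite Pattern Condition, which gives finitely many tile types up to translation and hence finite local complexity); and (c) multiplication by $\lambda_i$ preserves it, because the self-similarity of $\T_i$ pushes forward to a scaling-by-$\lambda_i$ symmetry of $\Omega_{\T}$ as in the previous paragraph. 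Granting (a)--(c), both $\lambda_1$ and $\lambda_2$ act on the same nonzero finite-dimensional $\QQ$-vector space $V$, commuting with each other; restricting to a common invariant line or to the $\QQ$-subalgebra of $\mathrm{End}(V)$ they generate, one gets that $\lambda_1^{a}$ and $\lambda_2^{b}$ satisfy the same algebraic constraints, and a standard argument (both are Perron numbers lying in a common number field, so the subgroup of $\RR_{>0}$ they generate has rank one) yields integers $m,n$ with $\lambda_1^m = \lambda_2^n$, i.e. multiplicative dependence.

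I expect the main obstacle to be step (c) combined with the descent of the self-similarity: the tiling $\T$ is only assumed to be a factor, not self-similar, so one must argue that the symmetry $S_i$ of $\Omega_{\T_i}$ actually induces a well-defined endomorphism of the invariant $V$ attached to $\Omega_{\T}$, with the correct eigenvalue $\lambda_i$, \emph{without} ever conjugating $\pi_i$ with $S_i$ back to itself. The way I would handle this is to pass to a suitable suspension/cohomological or ``asymptotic frequency'' model where $S_i$ acts functorially, use non periodicity of $\T$ to guarantee that this action on $V$ is injective hence expansive with constant $\lambda_i$, and then note that the two induced actions, living on the same $V$, must commute because they both come from the single space $\Omega_{\T}$. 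A secondary technical point is verifying finite-dimensionality of $V$ from the Finite Pattern Condition; this should follow from finite local complexity by a standard counting argument, but it is where the hypotheses genuinely enter.
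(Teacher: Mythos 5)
There are two genuine gaps, and together they undermine the proposal.

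First, the descent of the self-similarity to the factor (your step (c)) is not available. A factor map $\pi_i:\Omega_{\T_i}\to\Omega_{\T}$ gives no induced map on $\Omega_{\T}$ corresponding to $S_i$: for $S_i$ to push forward you would need it to preserve the fibres of $\pi_i$, and nothing in the hypotheses forces that; the factor $\T$ is not assumed self-similar, and indeed need not be. Passing to \v{C}ech cohomology does not repair this, since functoriality only gives a pullback $\check{H}^*(\Omega_{\T})\to\check{H}^*(\Omega_{\T_i})$ whose image need not be invariant under the substitution-induced endomorphism upstairs. The paper sidesteps this entirely: it never transports $S_i$ to the factor. Instead it works with patch frequencies, which are defined on $\T$ by pure counting, and proves (Proposition \ref{principal}) that the frequency of every ball-patch $\T[B_R(y)]$ lies in a set of the form $\{f/\lambda_i^{dk}: f\in F_i\}$ with $F_i$ finite. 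Even that transfer is delicate, precisely because factor maps between tiling systems are not sliding block codes (the examples of Petersen and Radin--Sadun cited in the paper); the bulk of the paper is a careful selection of preimage patches, via Lemma \ref{semi-sliding-block-code} and linear recurrence, to show $\freq(P)=\sum_{i,j}\freq(P_{i,j})$ for finitely many well-chosen preimages $P_{i,j}$.

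Second, and independently, your algebraic endgame is false as stated. Two commuting endomorphisms of a nonzero finite-dimensional $\QQ$-vector space $V\subset\RR$, acting as multiplication by $\lambda_1$ and $\lambda_2$, do not force multiplicative dependence: take $V=\QQ$ with multiplication by $2$ and by $3$. Likewise, "both are Perron numbers in a common number field, hence the group they generate has rank one" fails for $2$ and $3$ in $\QQ$. This is exactly the reason Cobham--Semenov type theorems are nontrivial; any proof must produce an actual \emph{identity} between powers, not merely a common invariant module. The paper gets this identity quantitatively: since the frequencies of arbitrarily large ball-patches of the non-periodic $\T$ form an infinite set contained in both $\{f/\lambda_1^{dk}:f\in F_1,\ k\in\NN\}$ and $\{f/\lambda_2^{dk}:f\in F_2,\ k\in\NN\}$ with $F_1,F_2$ finite, the pigeonhole principle yields two patches $P_1,P_2$ with $\freq(P_1)=a/\lambda_1^{n_1}=b/\lambda_2^{m_1}$ and $\freq(P_2)=a/\lambda_1^{n_2}=b/\lambda_2^{m_2}$ for the \emph{same} $a,b$, whence $\lambda_1^{n_2-n_1}=\lambda_2^{m_2-m_1}$. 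Your proposal contains no mechanism that could produce such an equation.
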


The problem we are interested in has been considered a long time ago by A. Cobham in \cite{Co1} 
and \cite{Co2} for fixed points of substitutions of constant length. 
He showed that if $p,q>1$  are two multiplicatively independent integers  then a sequence $x$ on a finite alphabet is both  $p$-substitutive and $q$-substitutive if and only if $x$ is ultimately periodic,
where $p$-substitutive means that $x$ is the image by a letter to letter morphism of a fixed point of a substitution
of constant length $p$.
This theorem was the starting point of a lot of work in many different directions such as~:
numeration systems for $\NN$, substitutive sequences and subshifts, automata theory and logic
(for more details see \cite{Be,BH1,BH2,BHMV,Du-2,Du-1,Du0,Ei,Fab,Fag,Ha1,Ha2,MV}). Later, in \cite{Se} A. Semenov proved a ``multidimensional'' Cobham type theorem, that is to say
a Cobham theorem for recognizable subsets of $\NN^d$.
This result can be stated in terms of self similar tilings,
and in the case these tilings are repetitive, our result is a generalization of Semenov Theorem.

\medskip

This paper is organized as follows: in Section \ref{definitions} we give some basic definitions relevant for the study of tiling systems and substitution tiling systems. In Section \ref{frequencies} we study the frequencies of the patches in   self-similar tilings   and in  their factors. First we prove that the frequencies of the patches in a self-similar tiling $\T$ are included in a finite union of geometric progressions of rate $\lambda$, where $\lambda$ is the  stretching factor of $\T$ (In \cite{HZ} the authors remarked this fact for minimal substitution subshifts). 
Next, we prove that the frequencies of the patches in a tiling $\T$, which is a factor of two self-similar tiling systems  with stretching factors $\lambda_1$ and $\lambda_2$ respectively,
are included in the intersection of two  
finite unions of geometric progressions, one of rate $\lambda_1$ and the other of rate $\lambda_2$. 
The proof of this result would be easier if  the factor maps were given by a kind of ``sliding block code''
(as it can be the case for subshifts), because in this case the preimage of a patch would be a finite collection of patches.
Nevertheless, this is no longer the case 
for the tiling systems we consider here (examples of factor maps, and even conjugacies, that are not given by a ``sliding block codes'' are given in \cite{Pe} and \cite{RS}), 
but we overcome this problem selecting carefully some patches in the preimages we considered. Finally, in Section 4 we  
deduce the main Theorem.

\section{Definitions and background}
\label{definitions} In this section we give the classical
definitions concerning tilings. For more details we refer to
\cite{So1}. A {\em tiling} of $\RR^d$ is a countable collection
$\T=\{t_i:i\geq 0\}$ of closed subsets of $\RR^d$ (which are known
as {\em tiles}) whose union is the whole space and their interiors
are pairwise disjoint. We assume that the tiles are homeomorphic to
closed balls and that they belong, up to translations, to a finite
collection of closed subsets of $\RR^d$ whose elements are called
{\em prototiles}. We say that two tiles are {\em equivalent} if they
are equal up to translations. It is often  useful to consider every
prototile as a closed set endowed with a label. In this case, two
tiles are equivalent if, in addition, their labels coincide.

The translation of the tiling $\T$ by a vector $v\in \RR^d$ is the
tiling $\T+v$ obtained after translating every tile of $\T$ by $-v$.
The tiling $\T$ is said to be {\em aperiodic} (or {\em non
periodic}) if $\T+v=\T$ implies $v=0$.

The {\em support} of a tile $t_i$, denoted by $\supp(t_i)$, is the
closed set that defines $t_i$. For every subset $A$ of $\RR^d$ we
define, as usual, $\T\cap A$ to be the set $\{t_i\cap A: i\geq 0\}$.
A {\em patch} $P$ is a finite collection of tiles. The support of a
patch $P$, denoted by $\supp(P)$, is the union of the supports of
the tiles in $P$. The {\em diameter} of a patch $P$ is the diameter
of its support, we call it $\diam(P)$. We define $P+v$ as we defined
$\T +v$. 

The tiling $\T$ satisfies the {\em finite pattern condition} FPC (or
equivalently, we say that it is {\em locally finite}) if for any
$r>0$, there are up to translation, only finitely many patches with
diameter smaller than $r$. This condition is automatically satisfied
in the case  of a tiling whose tiles are polyhedra that meet
face-to-face. A tiling $\T$ is {\em repetitive} if for any patch $P$
in $\T$ there exists $r>0$, such that  for every open ball $B_r(v)$
the collection $\T\cap B_r(v)$ contains a patch $P'$ equivalent to
$P$ (when it is clear from the context we will say that $P$
"appears" in $B_r(v)$). The non periodic repetitive tilings that
satisfy FPC are called {\em perfect} tilings.

\subsection{Tiling systems}
Let $\A$ be a finite collection of prototiles.  We denote by $T(\A)$
({\em full tiling space}) the space  of all the tilings of $\RR^d$
whose tiles are equivalent to some element in $\A$. We always
suppose that  $T(\A)$ is non empty.  The group $\RR^d$ acts on
$T(\A)$ by translations:
$$
(v,\T)\to \T+v  \mbox{ for } v\in \RR^d \mbox{ and } \T\in T(\A).$$
Furthermore, this action is continuous with the topology induced by
the following distance: take $\T$, $\T'$ in $T(\A)$, and define $A$
the set of $\varepsilon \in (0,1)$ such that there exist $v$ and
$v'$ in $B_{\varepsilon}(0)$  with  $$(\T+v)\cap
B_{1/\varepsilon}(0)=(\T'+v')\cap B_{1/\varepsilon}(0),$$ we set
$$
d(\T,\T')=\left\{\begin{array}{cc}
                  \inf A & \mbox{ if } A\neq \emptyset \\
                 1 & \mbox{ if } A=\emptyset.\\
               \end{array}\right.
$$
Roughly speaking, two tilings are close if they have the same
pattern in a large neighborhood of the origin, up to a small
translation. A {\em tiling system} is a pair $(\Omega, \RR^d)$ such
that $\Omega$ is a translation invariant closed subset of some full
tiling space. The orbit closure of the tiling $\T$ in $T(\A)$ is the
set $\Omega_{\T}=\overline{\{\T+v: v\in \RR^d\}}$. When $\T$
satisfies the FPC,  $\Omega_\T$ is compact (see \cite{Ru}). If $\T$
is repetitive then all the orbits are dense in $\Omega_{\T}$. In this
case the tiling system $(\Omega_{\T},\RR^d)$ is said to be {\em
minimal}.

\medskip

A {\em factor map}   between two tiling systems $(\Omega_{1},\RR^d)$
and $(\Omega_2,\RR^d)$ is a continuous map $\pi:\Omega_1\to
\Omega_2$ such that $\pi(\T+v)=\pi(T)+v$, for every $\T\in \Omega_1$
and $v\in \RR^d$.

In symbolic dynamics it is well-known that topological factor maps
between subshifts are always given by sliding-block-codes.  There
are examples  which show that this result can not be extended to
tiling systems (\cite{Pe}, \cite{RS}). The following Lemma shows
that factor maps between tiling systems are not far to be
sliding-block-codes. A similar result  can be found in \cite{HRS}.

\begin{lemma}
\label{semi-sliding-block-code} Let $\T_1$ and $\T_2$ be two tilings.
Suppose $\T_1$ verifies the FPC and  $\pi: \Omega_{\T_1}\to \Omega_{\T_2}$ is
a factor map. Then, there exists a constant $s_0>0$ such that
to every   $\varepsilon>0$ it is possible to associate $R_{\varepsilon}>0$ 
satisfying the following: Let $R\geq R_{\varepsilon}$. If  $\T$ and $\T'$  in $\Omega_{\T_1}$ verify $$\T\cap
B_{R+s_0}(0)=\T'\cap B_{R+s_0}(0),$$  then
$$(\pi(\T)+v)\cap B_{R}(0)=\pi(\T')\cap B_{R}(0)$$ for some $v\in
B_{\varepsilon}(0)$. 
\end{lemma}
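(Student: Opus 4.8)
The plan is to exploit uniform continuity of the factor map $\pi$ on the compact space $\Omega_{\T_1}$ (compactness coming from the FPC hypothesis, via \cite{Ru}), together with the definition of the metric $d$ on tiling spaces, which says that two tilings are close precisely when they agree on a large ball around the origin after a small translation. The constant $s_0$ will come from the fact that a tile of $\T_1$ meeting the ball $B_R(0)$ is contained in $B_{R+s_0}(0)$, where $s_0$ bounds the diameters of all prototiles of $\T_1$; this lets us pass from ``$\pi(\T)$ and $\pi(\T')$ are metrically close'' to an honest patch identity on a ball.

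First I would fix the diameter bound $s_0 = \sup\{\diam(t) : t \text{ a prototile of } \T_1\}$, which is finite since there are only finitely many prototiles. Next, given $\varepsilon > 0$, I would use uniform continuity of $\pi$ on the compact metric space $\Omega_{\T_1}$ to produce $\delta > 0$ such that $d(\T,\T') < \delta$ implies $d(\pi(\T),\pi(\T')) < \varepsilon'$, where $\varepsilon' \le \varepsilon$ is chosen small enough (say $\varepsilon' < 1$ and $\varepsilon' < \varepsilon$) that $d(\pi(\T),\pi(\T')) < \varepsilon'$ forces, by definition of $A$ in the metric, the existence of $v, v' \in B_{\varepsilon'}(0)$ with $(\pi(\T)+v) \cap B_{1/\varepsilon'}(0) = (\pi(\T')+v') \cap B_{1/\varepsilon'}(0)$. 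Then I would set $R_\varepsilon$ large enough that $1/\delta \le R_\varepsilon + s_0$ translates the hypothesis ``$\T \cap B_{R+s_0}(0) = \T' \cap B_{R+s_0}(0)$'' into ``$d(\T,\T') < \delta$'': indeed if the two tilings agree on $B_{R+s_0}(0)$ with $R \ge R_\varepsilon$, then taking $v = v' = 0$ in the definition of $A$ shows $d(\T,\T') \le 1/(R+s_0) \le 1/R_\varepsilon \le \delta$ (adjusting constants as needed).

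From there the argument is bookkeeping with the two small translations. Having $(\pi(\T)+v) \cap B_{1/\varepsilon'}(0) = (\pi(\T')+v') \cap B_{1/\varepsilon'}(0)$ with $\|v\|,\|v'\| < \varepsilon'$, I would rewrite this as $(\pi(\T) + (v-v')) \cap B_{1/\varepsilon'}(0)' = \pi(\T') \cap B_{1/\varepsilon'}(0)'$ on a slightly shrunk ball, absorbing the $\varepsilon'$-shift of the window; setting the output vector to be $v - v' \in B_{2\varepsilon'}(0) \subseteq B_\varepsilon(0)$ and requiring additionally $1/\varepsilon' - \varepsilon' \ge R$ (again achieved by enlarging $R_\varepsilon$ and shrinking $\varepsilon'$) gives $(\pi(\T)+ (v-v')) \cap B_R(0) = \pi(\T') \cap B_R(0)$, which is the assertion after renaming $v := v-v'$. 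I expect the only genuinely delicate point to be organizing the chain of inequalities relating $s_0$, $\delta$, $\varepsilon'$, and $R_\varepsilon$ so that every implication goes through simultaneously — in particular making sure the ball $B_{1/\varepsilon'}(0)$ on which $\pi(\T)$ and $\pi(\T')$ agree is large enough, after accounting for the two $\varepsilon'$-sized translations, to contain $B_R(0)$; this is purely a matter of choosing $\varepsilon'$ small and $R_\varepsilon$ large in the right order, with no conceptual obstacle. The role of $s_0$ is exactly to let us upgrade a ball-agreement of the \emph{tilings} $\T_1$-side into metric closeness, since the metric is insensitive to what happens outside a large ball while a patch identity on $B_{R+s_0}(0)$ controls all tiles actually touching $B_R(0)$.
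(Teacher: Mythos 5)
There is a genuine gap, and it sits exactly at the heart of what the lemma asserts. Your argument produces the following: given $\varepsilon>0$, choose $\varepsilon'$ and then $\delta$ by uniform continuity; agreement of $\T,\T'$ on $B_{1/\delta}(0)$ yields agreement of $\pi(\T)$ and $\pi(\T')$ on $B_{1/\varepsilon'}(0)$ up to a translation in $B_{2\varepsilon'}(0)$. This only gives the conclusion on the \emph{fixed} ball $B_{1/\varepsilon'}(0)$, whereas the lemma demands $(\pi(\T)+v)\cap B_R(0)=\pi(\T')\cap B_R(0)$ for \emph{every} $R\ge R_\varepsilon$, i.e.\ for arbitrarily large $R$, under a hypothesis whose overhead $s_0$ is a single constant fixed \emph{before} $\varepsilon$ and $R$. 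Your proposed fix, ``requiring additionally $1/\varepsilon'-\varepsilon'\ge R$,'' cannot hold for all $R\ge R_\varepsilon$ with $\varepsilon'$ fixed; and if you instead let $\varepsilon'$ (hence $\delta$) depend on $R$, the hypothesis would have to become agreement on $B_{1/\delta(R)}(0)$, with no control of $1/\delta(R)$ against $R+s_0$ for a fixed $s_0$. Uniform continuity alone cannot deliver a window overhead that is uniform in $R$; this is precisely why factor maps between tiling systems are only ``not far from'' sliding block codes. Relatedly, your identification of $s_0$ with the maximal prototile diameter is not what makes the statement work.

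The missing idea is a local-to-global step. In the paper, $s_0$ is the uniform-continuity radius attached to one fixed output precision $(1/\delta_0,\delta_0)$, where $2\delta_0$ is less than the minimal separation $\delta_0'$ between distinct translation vectors matching a large patch of a tiling in $\Omega_{\T_2}$ (this separation exists by FPC and compactness). If $\T\cap B_{R+s_0}(0)=\T'\cap B_{R+s_0}(0)$, then for \emph{each} $a\in B_R(0)$ the translates $\T+a$ and $\T'+a$ agree on $B_{s_0}(0)$, so $(\pi(\T)+a+t_a)$ and $\pi(\T')+a$ agree on $B_{1/\delta_0}(0)$ for some $t_a\in B_{\delta_0}(0)$. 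One then shows that $a\mapsto t_a$ is locally constant, hence constant on $B_R(0)$: two candidate translations realizing the same patch must either coincide or differ by at least $\delta_0'>2\delta_0$, which rules out the second alternative for nearby $a$. Gluing these local agreements over all $a\in B_R(0)$ yields the identity on all of $B_R(0)$ with the single vector $v=t_0$. Your proposal contains no analogue of this chaining argument, and without it the conclusion for unbounded $R$ does not follow.
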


\begin{proof}
The tiling $\T_2$ also satisfies the FPC because $\Omega_{\T_2}$ is compact. 
Since the tilings in $\Omega_{\T_2}$ have a finite number of tiles,
up to translations, there exists $\delta_0'>0$ such that if $y_1\neq
y_2\in \RR^d$ satisfy $(\T+y_1)\cap B_R(0)=(\T+y_2)\cap B_R(0)$ for
some $\T\in \Omega_{\T_2}$ and some  $R> \max\{\diam(p): p \mbox{
prototile in }   \T\}$, then $\|y_1-y_2\|\geq \delta_0'$ (for the
details see \cite{So1}).

\medskip

Let $0<\delta_0<\frac{\delta_0'}{2}$. Since $\pi$ is uniformly
continuous, there exists $s_0>1$  such that if $\T$ and  $\T'$  in
$\Omega_{\T_1}$ verify $\T\cap B_{s_0}(0)=\T'\cap B_{s_0}(0)$ then
  $$(\pi(\T)+v)\cap B_{\frac{1}{\delta_0}}(0)=\pi(\T')\cap
B_{\frac{1}{\delta_0}}(0),$$  for some  $v\in B_{\delta_0}(0)$.

\medskip

Let $0<\varepsilon< \delta_0$. By uniform continuity of $\pi$ 
there exists $0<\delta<\frac{1}{s_0}$ such that if  $\T$ and  $\T'$  in
$\Omega_{\T_1}$ verify $\T\cap B_{\frac{1}{\delta}}(0)=\T'\cap B_{\frac{1}{\delta}}(0)$ then
\begin{equation} 
\label{continuidad} 
(\pi(\T)+v)\cap B_{\frac{1}{\varepsilon}}(0)=\pi(\T')\cap
B_{\frac{1}{\varepsilon}}(0),
\end{equation}
for some  $v\in B_{\varepsilon}(0)$.

Now fix $R\geq R_{\varepsilon}=\frac{1}{\delta} -s_0$ and  $\T$ and $\T'$ two tilings  in $\Omega_{\T_1}$  verifying
\begin{equation}
\label{motivo2} \T\cap B_{R+s_0}(0)=\T'\cap B_{R+s_0}(0).
\end{equation}
Then, on one hand,  the tilings $\T$  and, $\T'$ satisfy (\ref{continuidad}), and on the other hand, 
we obtain that $(\T+a)\cap B_{s_0}(0)=(\T'+a)\cap
B_{s_0}(0)$ for every $a$ in $B_{R}(0)$. From the choice of $s_0$, this implies that
\begin{equation}
\label{motivo1}
(\pi(\T)+a+t_a)\cap
B_{\frac{1}{\delta_0}}(0)=(\pi(\T')+a)\cap
B_{\frac{1}{\delta_0}}(0),
\end{equation}
for some $t_a\in B_{\delta_0}(0)$. 

Since $\delta_0 > \varepsilon$, from (\ref{continuidad}) we get
\begin{equation}
\label{motivo3}
(\pi(\T)+v)\cap
B_{\frac{1}{\delta_0}}(0)=\pi(\T')\cap B_{\frac{1}{\delta_0}}(0).
\end{equation}
We will show that $t_a=v$ for every $a$ in $B_{R}(0)$. This
property together with (\ref{motivo1}) and (\ref{motivo3})   imply that
$$(\pi(\T)+v)\cap B_{R}(0)=\pi(\T')\cap
B_{R}(0).$$
For $a=0$, from (\ref{motivo1}) and (\ref{motivo3}) we have that  $t_0=v$ or $\|v-t_0\|\geq \delta_0'$.
Since $\|t_0-v\|\leq \delta_0+\varepsilon <2\delta_0<\delta_0'$, we conclude $t_0=v$.\\
For $a\in B_{R}(0)$, consider $s>0$ such that for
every $a'\in B_s(a)$ the patch
$$
P=((\pi(\T')+a)\cap B_{\frac{1}{\delta_0}}(0))\cap
((\pi(\T')+a+(a'-a))\cap B_{\frac{1}{\delta_0}}(0),
$$
contains a tile.\\
From (\ref{motivo1}) we get  $\pi(\T)+a+t_a+(a-a')\cap \supp(P)=P$.
Replacing $a$ by $a'$ in (\ref{motivo1}), we obtain
$\pi(\T)+a+t_a'+(a'-a)\cap \supp(P)=P$. This implies the norm of
$t_a-t_a'$  is equal to $0$ or greater than $\delta_0'$. Since
$\|t_a-t_a'\|\leq 2\delta_0<\delta_0'$,  we get $t_a=t_a'$. Thus we
conclude that the function that associates $t_a$ to $a$ is constant,
which implies that $t_a=t_0=v$ for every $a$ in
$B_{R}(0)$.
\end{proof}

\subsection{Linearly recurrent tilings.}

A tiling $\T$ is {\it linearly recurrent} (or strongly repetitive,
or linearly repetitive) if there exists a constant $L>0$ such that
for every patch $P$ in $\T$, any ball of radius $L\diam(P)$ contains
a translate of $P$. Every tiling in the orbit closure of a linearly
recurrent tiling is linearly recurrent with the same constant. When
$\T$ is linearly recurrent, we call $(\Omega_{\T},\RR^d)$ a {\it
linearly recurrent} tiling system.

\begin{lemma}
Let $\T_1$ and $\T_2$ be two tilings
verifying the FPC. If $\pi: \Omega_{\T_1}\to \Omega_{\T_2}$ is a
factor map and $\T_1$ is linearly repetitive, then $(\Omega_{\T_2},\RR^d)$ is linearly recurrent.
\end{lemma}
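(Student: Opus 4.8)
The plan is to combine linear recurrence of $\Omega_{\T_1}$ with the ``almost sliding block code'' property of $\pi$ supplied by Lemma~\ref{semi-sliding-block-code}. First I would make two reductions. Since $\T_1$ is linearly repetitive it is repetitive, so $(\Omega_{\T_1},\RR^d)$ is minimal; hence $\Omega_{\T_2}=\pi(\Omega_{\T_1})$ is minimal and compact, and in particular $\T_2$ is repetitive and satisfies the FPC. Also, by the fact recalled above that every tiling in the orbit closure of a linearly recurrent tiling is linearly recurrent with the same constant, it suffices to prove that $\T_2$ itself is linearly recurrent. Fix once and for all the linear recurrence constant $L$ of $\T_1$ (hence of every tiling in $\Omega_{\T_1}$), the largest prototile diameter $M$ of $\T_1$, the smallest tile diameter $m>0$ of $\T_2$, some $\varepsilon_0>0$, and the constants $s_0$ and $R_0:=R_{\varepsilon_0}$ produced by Lemma~\ref{semi-sliding-block-code}.

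The heart of the argument is the case of a patch $P$ of $\T_2$ with $D:=\diam(P)$ large, say $D\ge D_0:=\max\{R_0,\varepsilon_0,s_0,M\}$. Translating $\T_2$ together with a chosen preimage $\sigma\in\Omega_{\T_1}$ (with $\pi(\sigma)=\T_2$) by the same vector, I may assume $P\subseteq \T_2\cap B_D(0)$. Put $R:=D+\varepsilon_0$, so $R\ge R_0$, and fix an arbitrary centre $c\in\RR^d$. Applying linear recurrence of $\sigma$ to the patch $\sigma\cap B_{R+s_0+M}(0)$ produces $w\in\RR^d$ with $\|w-c\|\le 2L(R+s_0+M)$ and $\bigl(\sigma\cap B_{R+s_0+M}(0)\bigr)+w\subseteq\sigma$; a short tiling computation (here the enlargement by $M$ is precisely what turns an inclusion into an equality of local configurations) then yields $\sigma\cap B_{R+s_0}(0)=(\sigma-w)\cap B_{R+s_0}(0)$. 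Now Lemma~\ref{semi-sliding-block-code}, applied to $\sigma$ and $\sigma-w$ at this radius $R\ge R_0$, gives $v\in B_{\varepsilon_0}(0)$ with
\[
(\T_2+v)\cap B_R(0)=\pi(\sigma-w)\cap B_R(0)=(\T_2-w)\cap B_R(0),
\]
equivalently $\bigl(\T_2\cap B_R(-v)\bigr)+w+v=\T_2\cap B_R(w)$. Since $\|v\|<\varepsilon_0$ we have $P\subseteq B_D(0)\subseteq B_R(-v)$, so $P+w+v$ is a translate of $P$ contained in $\T_2\cap B_R(w)\subseteq\T_2$; and $P+w+v\subseteq B_{D+\varepsilon_0}(w)\subseteq B_{D+\varepsilon_0+\|w-c\|}(c)\subseteq B_{(2+8L)D}(c)$, where $D\ge D_0$ was used to absorb $\varepsilon_0,s_0,M$ into multiples of $D$. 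Thus the ball $B_{(2+8L)D}(c)$ contains a translate of $P$.

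For a patch $P$ of $\T_2$ with $\diam(P)<D_0$ there is no real content: by the FPC there are, up to translation, only finitely many such patches, and by minimality of $\Omega_{\T_2}$ each of them occurs in every ball of some fixed radius; letting $\rho_0$ be the maximum of these finitely many radii and using $\diam(P)\ge m$, such a $P$ recurs in every ball of radius $(\rho_0/m)\diam(P)$. Taking $L':=\max\{2+8L,\ \rho_0/m\}$ shows $\T_2$, hence $(\Omega_{\T_2},\RR^d)$, is linearly recurrent. The only delicate point is the ``short tiling computation'': passing from a patch $\sigma\cap B_r(0)$ to a sub-patch of $\sigma$ does not by itself give the identity $\sigma\cap B_{r'}(w)=(\sigma\cap B_{r'}(0))+w$ that Lemma~\ref{semi-sliding-block-code} needs, and one must enlarge $r$ by the maximal prototile diameter and carefully track which tiles meet which balls; the remainder is bookkeeping with the two translation vectors $v$ and $w$.
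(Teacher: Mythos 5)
Your proof is correct and follows essentially the same route as the paper: apply linear recurrence to a preimage tiling in $\Omega_{\T_1}$ and push the resulting repetition through the factor map via Lemma~\ref{semi-sliding-block-code}, absorbing the additive constants $s_0$ and $\varepsilon$ into the multiplicative recurrence constant. You are merely more explicit than the paper about two bookkeeping points it glosses over, namely converting the recurrence of a patch into an equality of $\T\cap B$ configurations (the enlargement by the maximal tile diameter) and disposing of patches of small diameter via the FPC and minimality.
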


\begin{proof}
Let   $\T\in \Omega_{\T_1}$. Consider  $\varepsilon>0$ and $R>0$ the positive number of Lemma \ref{semi-sliding-block-code}
associated to $\varepsilon$.  Since $\T$ is linearly repetitive with some constant $L$,  for any $y\in \RR^d$
there exists $v\in B_{L(R+s_0)}(y)$ such that $B_{R+s_0}(v)\subseteq  B_{L(R+s_0)}(y)$ and
$(\T+v)\cap B_{R+s_0}(0)=\T\cap B_{R+s_0}(0)$. From Lemma \ref{semi-sliding-block-code}, there exists
$t\in B_{\varepsilon}(0)$ such that $(\pi(\T)+v+t)\cap B_{R}(0)=\pi(\T)\cap B_R(0)$. This implies that any ball of radius $L(R+s_0)+2\varepsilon$ in $\pi(\T)$ contains a copy of $\pi(\T)\cap B_R(0)$. Since $Ls_0+2\varepsilon$ is smaller than some constant, it follows that $\pi(\T)$ is linearly recurrent.
\end{proof}

\subsection{Substitution tiling systems. }

 Let $M$ be a linear map on $\RR^d$. It is called {\em expansive} if there exists
 $\lambda>1$ such that
 $$
 \|Mv\|\geq \lambda\|v\|, \mbox{ for all } v\in \RR^d.$$
The map $M$ is a {\em similarity} if $\|Mv\|=\lambda\|v\|$
for all $v\in \RR^d$.\\
Let $\alpha$ be an eigenvalue of the  expansive (resp. similar)
linear map $M$, and let $v\neq 0$ be an  eigenvector associated to
$\alpha$. We have $\|Mv\|=|\alpha|\|v\|$, which implies that
$|\alpha|\geq \lambda$ (resp. $|\alpha|=\lambda$) and then,
$|\det(M)|\geq \lambda^d$ (resp. $|\det(M)|=\lambda^d$). Thus, if
$\Theta$ is a Borel set in $\RR^d$, we obtain
$$
\vol(M\Theta)=|\det(M)| \vol(\Theta)\geq \lambda^d \vol(\Theta)
\mbox{
 if $M$ is expansive.}
$$
$$
\vol(M\Theta)=|\det(M)| \vol(\Theta)=\lambda^d \vol(\Theta) \mbox{
 if $M$ is a  similarity.}
$$

Let $\A$ be a finite collection of prototiles and let $M$ be a
expansive linear  map on $\RR^d$. A {\em substitution} is a function
$S$ on the set of prototiles $\A$  that associates to each $p$ in
$P$ a patch $S(p)$ such that
\begin{itemize}
\item
the support of $S(p)$ is $M\supp(p)$.
\item
for every $q\in \A$ there exist $n_{p,q}\geq 0$ and $v_{p,q,k}\in
\RR^d$ for each $1\leq k\leq n_{p,q}$, such that
$$
S(p)=\{q+v_{p,q,k}: 1\leq k\leq n_{p,q},\,  q\in \A\}.
$$
\end{itemize}
The {\em substitution matrix} of $S$ is the matrix $A\in
\M_{\A\times\A}(\ZZ^+)$ which contains, in the coordinate $(p,q)$,
the number of different tiles in $S(p)$ which are equivalent to $q$.
That is, $A_{p,q}=n_{p,q}$ for each $p, q\in \A$.\\
The substitution $S$ can be defined  on $T(\A)$ in the following
way: if $t$ is a tile in $\T\in T(\A)$, such that $t$ is equivalent
to the prototile $p\in \A$, we define $$S(t)=S(p)+Mv,$$ where $v\in
\RR^d$ is such that $\supp(t)=\supp(p)+v$. Then, we define
$$S(\T) =\bigcup_{t\in \T}S(t)\in T(\A).$$

The substitution is {\em primitive} if $A$ is primitive, that is, there exists $k>0$ such that
$A^k>0$. In this case, the Perron eigenvalue of $A$ is $|\det(M)|$ (\cite{So1}). 

In this paper, we always suppose that   $S$ is primitive. 

The {\em substitution tiling system} associated to $S$ is the tiling system
$(X_S,\RR^d)$, where $X_S$ is the space of all the tilings $\T$ in
$T(\A)$ such that for every patch $P$ of $\T$ there exist a
prototile $p\in \A$ and $k>0$ satisfying $P\subseteq S^k(p)$. The
action of $\RR^d$ on $X_S$ is the translation. Because $S$ is primitive,
there always exist a tiling $\T_0\in T(\A)$ and $k_0>0$ such that
$S^{k_0}(\T_0)=\T_0$. It is classical (in the primitive case) that
$\Omega_{\T_0}=X_S=X_{S^k}$ for every $k>0$. So, without loss of
generality we can suppose that $S(\T_0)=\T_0$. In addition, we will
always  suppose that the fixed point of $S$ satisfies the FPC. In
this case   $X_S$ is a compact metric space and $(X_S,\RR^d)$ is
minimal.

A tiling $\T$ in $T(\A)$ which   satisfies the  FPC  is {\em self-affine} if it is the fixed point of a
substitution. The tiling $\T$ is said to be {\em self-similar} if it
is the fixed point of a substitution $S$ which is defined by a
similarity $M$ with constant $\lambda$ (For more details see \cite{So1}). We say $\lambda$ is the {\em  stretching factor} of $S$ or $\T$. 

Let $\T_0$ be a self-similar tiling which is the fixed point of a primitive
substitution $S$ satisfying the FPC. The following two results are
included in \cite{So}.

\begin{lemma}
\label{LR} $\T_0$ is linearly recurrent.
\end{lemma}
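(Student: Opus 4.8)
The plan is to exploit the exact self-similar structure of $\T_0$ together with primitivity, reducing everything to a single ``engulfing'' statement that is uniform across scales. Throughout, write $r_0=\min_{p\in\A}\diam(p)$, $R_0=\max_{p\in\A}\diam(p)$ and let $v_0>0$ be the smallest volume of a prototile. Since $\T_0=S^n(\T_0)=\bigcup_{t\in\T_0}S^n(t)$ and $M$ is a similarity of ratio $\lambda$, the \emph{level-$n$ supertiles} $S^n(t)$, $t\in\T_0$, partition $\RR^d$; each has diameter $\lambda^n\diam(t)\in[\lambda^n r_0,\lambda^n R_0]$ and, by the volume computation of the previous subsection, $\vol(S^n(t))=\lambda^{nd}\vol(t)\ge \lambda^{nd}v_0$.

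First I would localise a given patch. Let $P$ be a patch of $\T_0$ and put $r=\diam(P)$. Choose $n$ minimal with $\lambda^n r_0\ge r$, so that $\lambda^n\le \lambda r/r_0$. The level-$n$ supertiles meeting $\supp(P)$ lie in a ball of radius $r+\lambda^n R_0\le \lambda^n(r_0+R_0)$, and since their interiors are disjoint and each has volume at least $\lambda^{nd}v_0$, a volume comparison bounds their number by a constant $C=C(d,r_0,R_0,v_0)$ independent of $n$. As every tile of $P$ belongs to exactly one level-$n$ supertile, $P$ is contained in the cluster $W$ of these at most $C$ supertiles. Using $\T_0=S^n(\T_0)$ again, this cluster is $W=S^n(W')$, where $W'$ is the corresponding patch of at most $C$ tiles of $\T_0$.

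The heart of the argument is a scale-free engulfing lemma: there is an integer $M_1$, depending only on $S$ and $C$, such that every patch of $\T_0$ made of at most $C$ tiles occurs (up to translation) inside $S^{M_1}(p)$ for \emph{every} prototile $p$. Indeed, by the FPC there are only finitely many patches of at most $C$ tiles up to translation; each of them occurs in $\T_0$, and since $(X_S,\RR^d)$ is minimal, $\T_0$ is repetitive, so each such patch occurs inside every sufficiently high level supertile (the inradius of $S^N(p)$ grows like $\lambda^N$, eventually swallowing a recurrence ball of the patch); taking $M_1$ to be the maximum of the finitely many levels needed gives the claim. Applying the $\RR^d$-equivariant map $S^n$ to an occurrence of $W'$ inside $S^{M_1}(p)$ yields an occurrence of $W=S^n(W')$ inside $S^{n+M_1}(p)$; hence $W$, and a fortiori $P$, occurs inside \emph{every} level-$(n+M_1)$ supertile, with $M_1$ independent of the scale $n$.

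It remains to convert this into a linear recurrence constant. The level-$(n+M_1)$ supertiles tile $\RR^d$ and have diameter at most $\Delta:=\lambda^{n+M_1}R_0$. For any $x\in\RR^d$, the level-$(n+M_1)$ supertile containing $x$ is contained in the closed ball of radius $\Delta$ about $x$, and it contains a translate of $W\supseteq P$; thus every ball of radius $\Delta$ contains a translate of $P$. Using $\lambda^n\le\lambda r/r_0$ we obtain $\Delta\le (\lambda^{M_1+1}R_0/r_0)\,r=:L\,\diam(P)$ with $L$ independent of $P$, which is exactly linear recurrence. The main obstacle is the scale-free engulfing lemma: one must produce a \emph{single} $M_1$ valid for all scales $n$. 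This is precisely what the exact self-similarity $\T_0=S^n(\T_0)$ provides, since the level-$n$ supertiles form a rescaled copy of the same substitution tiling; the finiteness then comes from the FPC and the recurrence of individual patches from primitivity (minimality).
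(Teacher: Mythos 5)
The paper does not actually prove this lemma: it simply quotes it from Solomyak's paper \cite{So} (``The following two results are included in [So2]''), so any self-contained argument is necessarily a different route. Yours is the standard hierarchical one --- decompose $\T_0$ into level-$n$ supertiles via $\T_0=S^n(\T_0)$, trap a patch $P$ of diameter $r$ inside a cluster $W=S^n(W')$ of boundedly many supertiles at the scale $\lambda^n\approx r/r_0$, and then produce a scale-independent ``engulfing'' level $M_1$ so that every supertile of level $n+M_1$ contains a copy of $W$; since supertiles of that level cover $\RR^d$ and have diameter at most $\lambda^{n+M_1}R_0\leq (\lambda^{M_1+1}R_0/r_0)\diam(P)$, linear recurrence follows. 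This is essentially the argument in \cite{So}, and the individual steps are sound: the exact scaling of diameters and volumes because $M$ is a similarity, the volume count bounding the cluster size by a constant $C$ independent of $n$, the fact that each tile of $\T_0$ lies in exactly one level-$n$ supertile (disjointness of interiors), and the transfer of occurrences through $S^n$ (using $S^n(Q+u)=S^n(Q)+M^nu$).

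One statement does need repair, though the repair is already in your hands. The engulfing lemma as you state it --- ``every patch of $\T_0$ made of at most $C$ tiles occurs inside $S^{M_1}(p)$ for every prototile $p$'' --- is false, and the justification ``by the FPC there are only finitely many patches of at most $C$ tiles up to translation'' is also false: patches are not assumed connected, so already two-tile patches realize infinitely many translation classes, of unbounded diameter, and a patch of huge diameter cannot fit inside a fixed $S^{M_1}(p)$. The FPC bounds the number of translation classes only among patches of \emph{bounded diameter}. The fix is immediate from what you already established: your cluster $W'$, after rescaling by $M^{-n}$, lies in a ball of radius $r_0+R_0$, hence $\diam(W')\leq 2(r_0+R_0)$; state and prove the engulfing lemma for patches of diameter at most $2(r_0+R_0)$ (where FPC does give finiteness), and the rest of the proof goes through verbatim. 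A second, more minor point: you invoke ``minimal $\Rightarrow$ repetitive'' (Gottschalk's theorem), which the paper never states (it states only the converse). You can avoid it entirely: since $\T_0\in X_S=\Omega_{\T_0}$, every patch of $\T_0$ is contained, up to translation, in some $S^k(p)$ by the very definition of $X_S$, and primitivity (all entries of $A^m$ positive for large $m$) places a tile of type $p$ inside $S^m(q)$ for every prototile $q$, hence $S^k(p)$ inside $S^{k+m}(q)$; this is exactly the uniform engulfing you need.
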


\begin{lemma}
\label{cota} There exists $N>0$ such that if $P$ is a patch in
$\T_0$ whose support contains a ball of radius $R$, then whenever $P+v$ is a patch of $\T_0$ with $v>0$, $\left\| v  \right\| > \frac{R}{N}$.
\end{lemma}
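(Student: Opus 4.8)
The plan is to reduce the estimate, by iterated deflation, to the trivial remark that two distinct tiles of $\T_0$ which are translates of each other cannot be arbitrarily close to one another. The main external input is the unique composition property (recognizability) of the self-similar tiling $\T_0$, which holds because $S$ is primitive and $\T_0$ is a non periodic tiling satisfying the FPC (Solomyak, \cite{So}): there is a \emph{recognizability radius} $\rho>0$ such that $S\colon X_S\to X_S$ is a homeomorphism and, for all $\T,\T'\in X_S$, all $y\in\RR^d$ and all $r>\rho$, agreement of $\T$ and $\T'$ on $B_r(y)$ (i.e. $\T\cap B_r(y)=\T'\cap B_r(y)$) forces agreement of $S^{-1}\T$ and $S^{-1}\T'$ on $B_{(r-\rho)/\lambda}(M^{-1}y)$, where $\lambda$ is the stretching factor, so $M^{-1}$ contracts distances by $\lambda^{-1}$ (one enlarges $\rho$ if necessary to absorb the boundary correction coming from the diameters of the super-tiles). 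Iterating, agreement on $B_R(y)$ implies that the $k$-th deflations agree on a ball of radius greater than $R\lambda^{-k}-C$, where $C:=\rho/(\lambda-1)$. Also fix, once and for all, $D:=\max\{\diam(p):p\text{ a prototile}\}$ and a number $\delta>0$ such that every prototile contains a ball of radius $\delta$; both are finite and positive because there are finitely many prototiles, each homeomorphic to a closed ball.

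I record two elementary facts. (i) If $t\ne t'$ are tiles of some tiling of $\RR^d$ and $t'$ is a translate of $t$ by a vector $w$, then $\|w\|\ge 2\delta$, since $t$ and hence $t'$ each contain a ball of radius $\delta$ while the interiors of $t$ and $t'$ are disjoint. (ii) If $P$ and $P+v$ are patches of $\T_0$ with $v\ne 0$ and $B_R(x)\subseteq\supp(P)$, then, applying $\,\cdot-v\,$ to $P+v\subseteq\T_0$, we get $P\subseteq\T_0-v$; together with $P\subseteq\T_0$ this shows that $\T_0$ and $\T_0-v$ coincide on $\supp(P)\supseteq B_R(x)$. A direct computation from the definition of $S$ gives $S(\T-u)=S(\T)-Mu$, hence $S^{-1}(\T-u)=S^{-1}(\T)-M^{-1}u$; since $S^{-1}\T_0=\T_0$, the deflations of the pair $(\T_0,\T_0-v)$ are $(\T_0,\T_0-M^{-1}v)$, and $\|M^{-1}v\|=\lambda^{-1}\|v\|$. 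Consequently $\T_0$ and $\T_0-M^{-k}v$ agree on a ball of radius greater than $R\lambda^{-k}-C$ around $M^{-k}x$, for every $k\ge 0$ with $R\lambda^{-k}-C>0$.

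Now assume $R\ge C+2D$ and let $k$ be the largest integer with $R\lambda^{-k}-C\ge 2D$. For this $k$ the agreement ball around $M^{-k}x$ has radius greater than $2D$, so its interior contains $\supp(t)$ for the tile $t$ of $\T_0$ containing $M^{-k}x$. Matching $\T_0$ and $\T_0-M^{-k}v$ on that ball then forces $\T_0-M^{-k}v$ to contain a tile with support $\supp(t)$, hence $\T_0$ to contain a tile $\tau$, distinct from $t$, which is a translate of $t$ by $-M^{-k}v$; by (i), $\|M^{-k}v\|\ge 2\delta$, that is $\|v\|\ge 2\delta\,\lambda^{k}$. By maximality of $k$ one has $\lambda^{k+1}>R/(C+2D)$, so $\|v\|\ge 2\delta\,\lambda^{k}>\tfrac{2\delta}{\lambda(C+2D)}\,R$. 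If instead $0<R<C+2D$, then fact (i) applied to any single tile of $P$ (which, like its translate by $-v$, belongs to $\T_0$) already gives $\|v\|\ge 2\delta>\tfrac{2\delta}{\lambda}>\tfrac{2\delta}{\lambda(C+2D)}\,R$. In both cases the lemma holds with $N:=\lambda(C+2D)/(2\delta)$.

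The only non-routine ingredient — and the step I expect to require the most care — is the quantitative form of recognizability invoked in the first paragraph: that $S$ is invertible on $X_S$ and that one deflation loses only a bounded annulus of information. This is exactly where the non periodicity of $\T_0$ enters; without it $S$ need not be injective on $X_S$. Everything else is bookkeeping: tuning the radius recursion $r\mapsto(r-\rho)/\lambda$ so that, after $k$ deflations, the visible ball and the displacement $M^{-k}v$ have shrunk at the same geometric rate $\lambda^{-k}$ (which forces one to iterate until $\lambda^{k}\approx R$), and checking the routine, but sign-sensitive, claim that agreement of two tilings on a ball large enough to contain a whole tile, under a small translation, forces an actual coincidence of two prototile translates inside $\T_0$, to which fact (i) applies.
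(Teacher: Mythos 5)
The paper gives no proof of this lemma: it is stated as being ``included in \cite{So}'', so there is no internal argument to measure yours against. Your proof is correct, and it is essentially the standard derivation of this repulsion property from the unique composition property of non periodic self-similar FPC tilings --- which is exactly the main theorem of the reference the authors cite --- so taking the quantitative, local form of recognizability (a uniform radius $\rho$ such that one deflation loses only a bounded annulus of information) as your sole external input is a fair division of labour. The rest of your argument is sound bookkeeping: the commutation $S(\T-u)=S(\T)-Mu$, the radius recursion giving agreement of the $k$-th deflations on a ball of radius at least $R\lambda^{-k}-\rho/(\lambda-1)$, stopping the deflation when that radius is comparable to a tile diameter, and the elementary lower bound $2\delta$ on the distance between two distinct tiles of a tiling that are translates of one another (which uses invariance of domain to guarantee each prototile contains a ball of uniform radius $\delta$). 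Two small remarks. First, the lemma is false for periodic tilings (a period yields arbitrarily large recurrent patches at a fixed displacement), so non periodicity of $\T_0$ is a genuine hypothesis that the paper leaves implicit in this section; you correctly identify it as the point where recognizability, and hence the whole argument, would fail. Second, the step asserting that $P\subseteq\T_0$, $P\subseteq\T_0-v$ and $B_R(x)\subseteq\supp(P)$ force $\T_0$ and $\T_0-v$ to agree on $B_R(x)$ requires a slight shrinking of the ball, since a tile meeting $B_R(x)$ only along boundaries of tiles of $P$ need not belong to $P$; this costs an additive constant of the order of a tile diameter and is absorbed into your constant $C$, consistent with your explicit allowance for enlarging $\rho$. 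Neither point is a gap in substance.
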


These two lemmata mean that the minimal distance between two
equivalent patches in a self-similar tiling is neither  too large
nor too small compared to their sizes.

\section{Frequencies}
\label{frequencies}
Consider a tiling $\T$ of $\RR^d$. For a set $F\subseteq \RR^d$, we
write
$$
\T[[F]]=\{t\in \T: t\cap F\neq \emptyset\}.$$ 
A $\T$-{\em corona} is a
patch $\T[[\supp(t)]]$, where $t$ is a tile in $\T$.
Remark that for some $\epsilon \in \RR^d$ we could have 
$\T[[F+\epsilon]] = \T[[F]]$.
To avoid this situation we define, for $v\in \RR^d$, $\T [F,v] = \T [[F]] - v$.
When $F$ is a ball $B_R(v)$ we write $\T [B_R(v)]$ instead of $\T [B_R(v),v]$.

In the sequel we suppose that  $\T_0$ is a self-similar tiling which
is the fixed point of a primitive substitution $S$, with stretching factor $\lambda$, satisfying the FPC.

\subsection{Van Hove sequences.}
In order to define the notion of frequency of a patch we need the
concept of Van Hove sequences.

\medskip

Let $P$ be a patch in $\T_0$ and let $\Theta\subset \RR^d$. Denote
by $L_P(\Theta)$ the number of patches included in $\T_0\cap \Theta$
which are equivalent to $P$ (\cite{So1}).

A sequence $(\Theta_n)_{n\geq 0}$ of subsets of $\RR^d$ is a {\em
Van Hove} sequence if for any $r>0$,
$$
\lim_{n\to \infty}\frac{\vol((\partial
\Theta_n)^{+r})}{\vol(\Theta_n)}=0,$$ where
$$
\Theta^{+r}=\{x\in \RR^d: \dist(x,\Theta)\leq r\},$$ and 
$\partial\Theta$ is the border of $\Theta$.

In \cite{So1}, it was shown  for any patch $P$ in $\T_0$ there is a
number $\freq(P)>0$ such that for any Van Hove sequence
$(\Theta_n)_{n\geq 0}$,
$$
\lim_{n\to \infty}\frac{L_P(\Theta_n)}{\vol(\Theta_n)}=\freq(P).
$$

Suppose that $P$ and $Q$ are two patches in $\T_0$. In order to simplify the notation, we will write $L_{P}(Q)$, $\vol(P)$ and 
$(\partial P)^{+r}$ instead of $L_{P}(\supp(Q))$, $\vol(\supp(P))$ and 
$(\partial \supp(P))^{+r}$ respectively.

\medskip

It is easy to show that $(M^n\Theta)_{n\geq 0}$ is a Van Hove sequence when  $M:\RR^d\to \RR^d$ is an expansive linear map and $\Theta$  is a compact subset of $\RR^d$ with non empty interior and such that $\vol(\partial \Theta)=0$.  Consequently, to compute $\freq(P)$ we will use  the following limit
$$
\freq(P)=\lim_{k\to
\infty}\frac{L_P(S^k(p))}{\vol(S^k(p))},
$$
for any prototile $p$ in $\A$.

\subsection{Patch frequencies of a self-similar tiling }
The next proposition extends a result of C. Holton and L. Zamboni \cite{HZ} obtained for minimal substitution subshifts. But before
we will need the following technical lemma:

\begin{lemma}
\label{corona}
Suppose that $\T$  satisfies the FPC. Then there exists a constant $\eta>0$
such that for every $y\in \RR^d$ the ball $B_{\eta}(y)$ is contained in the support of a corona in $\T$.
\end{lemma}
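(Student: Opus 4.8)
The plan is to exploit that every point of $\RR^d$ lies in some tile, and that the corona of a tile always contains an open neighbourhood of that tile whose size is controlled uniformly by the FPC. First I would fix $y\in\RR^d$ and choose a tile $t\in\T$ with $y\in\supp(t)$, which is possible since the tiles cover $\RR^d$. Writing $C(t)=\T[[\supp(t)]]$ for the corona of $t$, the goal becomes to find $\eta>0$, independent of $y$ and $t$, with $B_\eta(y)\subseteq\supp(C(t))$.

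The core step is to show that $\supp(t)$ lies in the interior of $\supp(C(t))$. Let $p\in\supp(t)$. By local finiteness (the FPC) only finitely many tiles meet the unit ball around $p$, and any tile whose support does not contain $p$ has strictly positive distance to $p$; hence there is $\epsilon_p>0$ such that every tile meeting $B_{\epsilon_p}(p)$ contains $p$ in its support. Each such tile intersects $\supp(t)$, since it contains $p\in\supp(t)$, so it belongs to $C(t)$; as these tiles cover $B_{\epsilon_p}(p)$, we obtain $B_{\epsilon_p}(p)\subseteq\supp(C(t))$. Thus $p\in\mathrm{int}(\supp(C(t)))$, and since $p$ was arbitrary, $\supp(t)\subseteq\mathrm{int}(\supp(C(t)))$. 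Because $\supp(t)$ is compact and sits inside this open set, the number $\eta_t=\dist(\supp(t),\RR^d\setminus\mathrm{int}(\supp(C(t))))$ is strictly positive, and consequently $B_{\eta_t}(y)\subseteq\supp(C(t))$ for every $y\in\supp(t)$.

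Finally I would make $\eta_t$ uniform. Each tile has diameter at most $D=\max\{\diam(p):p\ \text{prototile}\}$, so $\supp(C(t))\subseteq(\supp(t))^{+D}$ and every corona has diameter at most $3D$. By the FPC there are, up to translation, only finitely many patches of diameter at most $3D$, hence only finitely many corona types, and $\eta_t$ depends only on the type of $C(t)$. Taking $\eta$ to be the minimum of these finitely many positive values yields $\eta>0$ with $B_\eta(y)\subseteq\supp(C(t))$ for every $y$. The delicate point is precisely this uniformity: without the FPC the radii $\epsilon_p$, and with them $\eta_t$, could shrink to zero along a sequence of ever thinner or ever more complicated coronas, and it is the finiteness of corona types that rules this out.
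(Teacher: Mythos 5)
Your proof is correct and follows essentially the same route as the paper: you attach to each tile $t$ a positive constant measuring how far $\supp(t)$ sits inside $\supp(\T[[\supp(t)]])$, and then use the FPC to reduce to finitely many corona types (with their center tiles) so that the infimum $\eta$ over all tiles is positive. The only difference is that you actually justify the positivity of the per-tile constant, via the local-finiteness argument showing $\supp(t)\subseteq \mathrm{int}\bigl(\supp(\T[[\supp(t)]])\bigr)$, whereas the paper simply asserts that $\eta_t=\dist(\partial t,\partial \T[[\supp(t)]])$ is positive.
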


\begin{proof}
 Let $t$ be a tile in $\T$ .  The number
$$\eta_t=\dist(\partial t, \partial\T[[\supp(t)]] )$$ is positive for every  tile $t$.   The FPC implies there is a finite number of coronas up translations. Hence  we get
$$
\eta=\min\{ \eta_t: t\in \T\}>0.
$$
Notice that the set
$$
\{x\in \RR^d: \dist(x,t)\leq \eta\}
$$
is contained in the support of $\T[[\supp(t)]]$ for every tile $t$ in $\T$. Thus if $y$ is a point in $\RR^d$ belonging to the tile $t\in \T$ then the ball  $B_{\eta}(y)$ 
is contained in the support of $\T[[\supp(t)]]$.
\end{proof}

\begin{prop}
\label{frecuencia} 
There exists a finite set $F\subset \RR$ such
that  for every patch $P$   in $\T_0$  satisfying $P=\T_0[B_R(y)]$,
for some $R>0$ and $y\in \RR^d$,
$$
\freq(P)=\frac{f}{\lambda^{dk}},
$$
where $f\in F$ and $k>0$ is such that
$$\lambda^{k-1}\eta\leq \diam(P)< \lambda^k\eta,$$
with $\eta$ is the constant of Lemma \ref{corona}.

\end{prop}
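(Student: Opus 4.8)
The plan is to exploit the self-similarity of $\T_0$ to express the frequency of a ball-patch $P=\T_0[B_R(y)]$ in terms of the frequency of a controlled-size ``seed'' patch sitting inside a single corona, and then to observe that there are only finitely many such seeds up to translation. More precisely, recall that $\freq(P)=\lim_{k\to\infty} L_P(S^k(p))/\vol(S^k(p))$ for any prototile $p$. Since $S$ acts by the similarity $M$ with $|\det M|=\lambda^d$, substituting $n$ times scales linear dimensions by $\lambda^n$ and volumes by $\lambda^{dn}$. The first step is therefore to relate occurrences of $P$ in $\T_0$ to occurrences of some patch $P'$ in $\T_0$ of diameter comparable to $\eta$, via $P = S^{k}(\text{something})$-type reasoning; concretely, if $\lambda^{k-1}\eta\le \diam(P)<\lambda^k\eta$, then $S^{-k}$ ``shrinks'' $P$ to something of size on the order of $\eta$, and by Lemma \ref{corona} a patch of that size is contained in (a translate of) a single corona of $\T_0$.

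The key steps, in order. First, I would use the recognizability/self-similarity of $\T_0$: every patch $P$ in $\T_0$ that is large enough is, after applying $S^{-1}$ the appropriate number of times, obtained as the $S^k$-image of a small patch $Q$, together with the fact that occurrences of $P$ in $\T_0$ are in bijection (via $S^k$) with occurrences of $Q$ in $\T_0$ -- this is exactly where primitivity and the FPC, and Lemmas \ref{LR} and \ref{cota}, do their work, since Lemma \ref{cota} controls that distinct occurrences of $Q$ map to distinct occurrences of $P$ and no spurious occurrences of $P$ are created. This gives
$$
\freq(P)=\lim_{n\to\infty}\frac{L_P(S^{n}(p))}{\vol(S^{n}(p))}
=\lim_{n\to\infty}\frac{L_{Q}(S^{n-k}(p))}{\lambda^{dk}\,\vol(S^{n-k}(p))}
=\frac{\freq(Q)}{\lambda^{dk}}.
$$
Second, I would show that the choice of $k$ with $\lambda^{k-1}\eta\le \diam(P)<\lambda^k\eta$ forces $\diam(Q)$ to lie in a fixed bounded range (roughly between $\eta/\lambda$ and something like $\eta$ up to a constant depending only on $S$), so that by the FPC the patch $Q$ belongs to a finite list of patches $\{Q_1,\dots,Q_m\}$ up to translation. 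Third, I set $F=\{\freq(Q_1),\dots,\freq(Q_m)\}$, a finite subset of $\RR$, and conclude $\freq(P)=f/\lambda^{dk}$ with $f\in F$.

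The main obstacle I anticipate is the first step: making precise the passage from $P$ to its ``$S^{-k}$-preimage'' $Q$ and proving that occurrences of $P$ in $\T_0$ correspond \emph{bijectively} to occurrences of $Q$. One must be careful that $P$ need not itself equal $S^k(Q)$ on the nose -- it is a \emph{ball-patch} $\T_0[B_R(y)]$, not an $S^k$-image of a corona -- so I would instead sandwich $P$ between $S^k$-images of coronas: pick the $\T_0$-corona $C$ (a patch, by Lemma \ref{corona}, whose support contains a ball of radius $\eta$) that ``contains'' the appropriately scaled-down center, set $Q$ to be the smallest union of coronas of $\T_0$ whose $S^k$-image covers $\supp(B_R(y))$, and use Lemma \ref{cota} to guarantee that an occurrence of $P$ determines, and is determined by, an occurrence of $Q$ together with a bounded amount of extra data that is itself constrained to finitely many possibilities by the FPC. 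Folding that bounded extra data into the finite set $F$ is routine once the counting bijection is set up; verifying the diameter bounds that pin down $k$ and keep $\diam(Q)$ in a fixed window is a direct computation using $\|Mv\|=\lambda\|v\|$. The secondary technical point -- that the Van Hove limit along $(S^n(p))_n$ is unaffected by the $S^k$-shift in the index -- is immediate since $(S^{n-k}(p))_n$ is cofinal and volumes rescale by the fixed factor $\lambda^{dk}$.
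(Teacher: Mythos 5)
Your overall strategy --- rescale by $S^{k}$ so that $P$ lives inside the $S^k$-image of a patch of size comparable to $\eta$, invoke the FPC to get finitely many such small patches, and read off $\freq(P)$ as $\lambda^{-dk}$ times a quantity drawn from a finite set --- is the same as the paper's. But the central counting step as you state it has a genuine gap: occurrences of $P$ in $\T_0$ are \emph{not} in bijection with occurrences of a single seed patch $Q$, so the formula $\freq(P)=\freq(Q)/\lambda^{dk}$ fails in general. Two things go wrong. First, the $S^k$-image of a single corona $B$ can contain several occurrences of $P$; Lemma \ref{cota} only bounds their number by a uniform constant $K$ (via a packing argument using that $\lambda^{kd}/R^d$ is bounded), it does not force that number to be $1$, and it certainly does not rule out ``spurious'' occurrences --- it is a separation lemma, not a recognizability statement. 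Second, and more importantly, different occurrences of $P$ can sit over non-equivalent level-$k$ configurations: $P$ may occur inside $S^k(B_1)$ and also inside $S^k(B_2)$ for inequivalent coronas $B_1,B_2$, so there is no single $Q$ whose occurrences account for all occurrences of $P$. Your remark about ``a bounded amount of extra data'' points at exactly the missing content, but it cannot be folded in after a bijection is ``set up,'' because no bijection exists; the count has to be organized as a sum over all corona classes from the start.

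The paper's proof does precisely this reorganization: writing $S^n(p)=S^k(S^{n-k}(p))$, it assigns each occurrence of $P$ in $S^n(p)$ to a corona of $S^{n-k}(p)$ whose $S^k$-image contains it (Lemma \ref{corona} guarantees one exists), obtaining $L_P(S^n(p))=\sum_{B\in D}L_B(S^{n-k}(p))\,N(\cdot,P,B)$ with $N\in\{0,\dots,K\}$ and $D$ the finite set of coronas, whence $\freq(P)=\lambda^{-dk}\sum_{B\in D}N(P,B)\,\freq(B)$. The finite set $F$ must then be the set of \emph{all} sums $\sum_{B\in D}N_B\,\freq(B)$ with $N_B\in\{0,\dots,K\}$, not the set of frequencies of the seeds as you define it. A side benefit of this organization is that no recognizability or unique-composition property is needed: the level-$k$ supertile structure of $S^k(S^{n-k}(p))$ is given by construction rather than recovered from the tiling, whereas your route through ``the $S^{-k}$-preimage of an occurrence of $P$'' would require an invertibility of $S$ on $X_S$ that the paper never establishes. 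Repairing your write-up essentially amounts to reproducing the paper's decomposition.
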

\begin{proof}
Let $\A$ be the prototile set associated to $\T_0$. We define
$$\overline{l}=\max\{\diam(p): p\in \A\}.$$

Let $P$ be a patch in $\T_0$ such that $P=\T_0[[B_R(y)]]$, for some
$R>0$ and $y\in \RR^d$. This implies that
\begin{equation}
\label{diam}
\diam(P)\leq 2(R+\overline{l}).
\end{equation}

Let $k\geq 0$ be such that
\begin{equation}
\label{diam2}
\lambda^{k-1}\eta\leq \diam(P)< \lambda^k\eta.
\end{equation}
By Lemma \ref{corona}, there exists a corona $B$ which support contains the ball $B_{\eta}(M^{-k}y)$. Because the support of $S^k(B)$ contains the ball  $B_{\lambda^k\eta}(y)$, by (\ref{diam2}) we deduce that
$S^k(B)$ contains  the patch $P$. From Lemma \ref{cota}, we have
\begin{equation}
\label{1} L_{P}(S^k(B))\leq
\frac{\vol(S^k(B))}{\vol(B_{\frac{R}{N}}(0))}= 
\frac{\lambda^{kd}}{\frac{R^d}{N^d}}\frac{\vol(B)}{\vol(B_1(0))}.
\end{equation}
From  (\ref{diam}) and (\ref{diam2}) we obtain
$$
\label{2} \frac{1}{2(R+\overline{l})}\leq\frac{1}{\diam(P)}\leq
\frac{1}{\lambda^{k-1}\eta},
$$
which implies there exists $C$ not depending on $k$ such that
\begin{equation}
\label{3} \frac{\lambda^{kd}}{R^d}\leq
\left(\frac{2\lambda}{\eta-\frac{2\overline{l}}{\lambda^{k-1}}}\right)^d\leq
C.
\end{equation}
 From (\ref{1}) and (\ref{3}) we
conclude there exists a constant $K$, independent on $P$, $k$ and
$B$, such that
$$
L_P(S^k(B))\leq K.
$$
Let $P'$ be any patch in $\T_0$ and let $D$ be the set of all the
$\T_0$-coronas, up to translation. We have
$$
L_P(S^k(P'))=\sum_{B\in D}L_B(P')N(P',P,B)$$

where $N(P',P,B)$ is some integer in $\{0,\cdots,
L_P(S^k(B))\}\subseteq \{0,\cdots, K\}$. Thus, for $p\in \A$ and
$n>k$,
\begin{eqnarray*}
\frac{L_P(S^n(p))}{\vol(S^n(p))} & = & \frac{L_P(S^k(S^{n-k}(p)))}{\vol(S^n(p))}\\
                                 & = &  \sum_{B\in D}\frac{L_B(S^{n-k}(p))N(S^{k-n}(p),P,B)}{\vol(S^n(p))}\\
                                 & = & \sum_{B\in D}\frac{L_B(S^{n-k}(p))}{\vol(S^{n-k}(p))}\frac{\vol(S^{n-k}(p))}{\vol(S^n(p))} N(S^{k-n}(p),P,B)\\
                                 & = & \frac{1}{\lambda^{kd}}\sum_{B\in D}\frac{L_B(S^{n-k}(p))}{\vol(S^{n-k}(p))} N(S^{k-n}(p),P,B)
\end{eqnarray*}

Because $N(S^{k-n}(p),P, B)$ is in $\{1,\cdots, K\}$ for every $n>k$,
we can take a convergent subsequence to obtain
\begin{eqnarray*}
\freq(P) & = & \frac{1}{\lambda^{kd}}\lim_{n\to \infty}\sum_{B\in D}\frac{L_B(S^{n-k}(p))}{\vol(S^{n-k}(p))}N(S^{k-n}(p),P,B)\\
         & = & \frac{1}{\lambda^{kd}}\sum_{B\in D}\freq(B)N(P,B),
\end{eqnarray*}

where $N(P,B)$ is some integer in $\{0,\cdots, K\}$ for every $B\in
D$. Because $D$ is finite, to conclude it suffices to take
$$
F=\left\{\sum_{B\in D}\freq(B)N_B: N_B\in \{0,\cdots, K\}\right\}.
$$


\end{proof}

\begin{remark}
\label{unique-ergodicity}{\rm  From \cite{So1} we know $(\Omega_{\T_0},\RR^d)$ is uniquely ergodic. Hence, the frequency of a patch $P$ does not
depend on the tiling. That is, $\freq(P)$ is the same for every $\T$
in $\Omega_{\T_0}$.}
\end{remark}

\subsection{Patch frequency in the factor }
The next result extends Proposition \ref{frecuencia} to tiling factors of self-similar tiling systems.
The main problem we have to overcome is that the factor map is not necessarily given by a sliding block code.
Hence the first part of the next proof consists in selecting carefully the preimages of a given patch $P$ by means of a finite induction procedure.
Then, we show that the frequency of the patch $P$ is the sum of the frequencies of the selected patches.

\begin{prop}
\label{principal} Let $\T$ be a   non periodic
tiling. 
If there exists a factor map $\pi:\Omega_{\T_0}\to
\Omega_{\T}$ then there exists a finite set $F\subseteq \RR$ such
that for every  patch $P$ in $\T$ satisfying $P=\T[B_R(y)]$, for some 
$R>0$ and $y\in \RR^d$,
$$
\freq(P)=\frac{f}{\lambda^{dk}},
$$
where $f\in F$ and $k>0$ is such that 
$$
\eta\lambda^{k-3}\leq \diam(P) < \eta\lambda^{k-1},
$$
if $R$ is big enough. 

\end{prop}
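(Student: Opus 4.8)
The plan is to transfer the structure of the proof of Proposition \ref{frecuencia} through the factor map $\pi$, using Lemma \ref{semi-sliding-block-code} to control how preimages of a patch $P$ in $\T$ look inside tilings of $\Omega_{\T_0}$. First I would fix a patch $P=\T[B_R(y)]$ in $\T$ with $R$ large, and pull it back: for each tiling $\T'\in\Omega_{\T_0}$ with $\pi(\T')=\T$ (we may translate so the relevant piece sits near the origin), Lemma \ref{semi-sliding-block-code} guarantees that the restriction of $\T'$ to a ball $B_{R'+s_0}(0)$, with $R'$ slightly larger than $R$, determines $\pi(\T')\cap B_{R'}(0)$ up to a translation of size $\varepsilon$. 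So the occurrence of $P$ in $\T$ is "caused" by a finite list of patches $Q_1,\dots,Q_m$ in $\T_0$ (of controlled diameter, comparable to $\diam(P)$ up to a bounded multiplicative constant — this is where the exponents $k-3$ and $k-1$, versus $k-1$ and $k$ in Proposition \ref{frecuencia}, come from, the loss being absorbed into a couple of extra factors of $\lambda$). The key point is that although $\pi$ is not a sliding block code, Lemma \ref{semi-sliding-block-code} makes it a "sliding block code up to a small translation", and since $\T_0$ satisfies the FPC there are only finitely many such preimage patches of a given bounded diameter.

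Next I would set up the counting. An occurrence of $P$ at position $v$ in $\T$ corresponds, via the preimage description, to an occurrence at $v$ (up to the $\varepsilon$-ambiguity) of one of the $Q_j$ in $\T'$ — and crucially, by Lemma \ref{cota} applied in $\T_0$, distinct occurrences of a $Q_j$ are separated by a definite distance comparable to their size, so we do not overcount, and by unique ergodicity (Remark \ref{unique-ergodicity}) frequencies in $\T_0$ are well defined and independent of the chosen tiling. The delicate bookkeeping is that different preimage patches $Q_j$ might map to overlapping occurrences of $P$, so I would perform the same kind of finite induction sketched in the paragraph before the statement: select, for each occurrence of $P$, one canonical preimage patch, discarding redundancies, so that in the end $\freq(P)$ is exactly a finite sum $\sum_j c_j \freq(Q_j)$ with nonnegative integer coefficients $c_j$ bounded by a constant independent of $P$. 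Here the bound on the $c_j$ uses the same volume-vs-separation estimate as in inequality (\ref{1})–(\ref{3}) of the previous proof, now applied to the preimage patches whose diameter is within a fixed ratio of $\diam(P)$.

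Then I would invoke Proposition \ref{frecuencia}: each $\freq(Q_j)$ equals $f_j/\lambda^{d k_j}$ for some $f_j$ in the finite set $F_0$ of that proposition and some integer $k_j$ with $\lambda^{k_j-1}\eta\le \diam(Q_j)<\lambda^{k_j}\eta$. Since $\diam(Q_j)$ is comparable to $\diam(P)$ with a ratio bounded by fixed powers of $\lambda$, the integer $k_j$ differs from the $k$ attached to $P$ (defined by $\eta\lambda^{k-3}\le\diam(P)<\eta\lambda^{k-1}$) by a bounded amount, say $k_j = k - a_j$ with $a_j$ in a fixed finite range of integers. Therefore $\freq(P) = \sum_j c_j f_j \lambda^{-d(k-a_j)} = \lambda^{-dk}\sum_j c_j f_j \lambda^{d a_j}$, and the finite set $F$ can be taken to be all numbers of the form $\sum c_j f_j \lambda^{d a_j}$ with $c_j$ ranging over $\{0,\dots,K\}$, $f_j$ over $F_0$, and $a_j$ over the fixed finite range — a finite set, as required.

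The main obstacle I expect is the second step: making the "select one canonical preimage per occurrence" procedure precise while keeping the coefficients uniformly bounded and genuinely reconstructing $\freq(P)$ as an exact finite sum rather than merely bounding it. Because $\pi$ only respects the $\RR^d$-action up to the $\varepsilon$-translations coming from Lemma \ref{semi-sliding-block-code}, one has to be careful that the $\varepsilon$-ambiguity does not create or destroy occurrences of $P$ near the boundary of the counting region; this is handled by choosing $\varepsilon$ much smaller than the separation constant of Lemma \ref{cota} (so that within the resolution $1/\varepsilon$ the patch $P$ either appears or does not, unambiguously), and by using a Van Hove sequence so the boundary effects vanish in the frequency limit. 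Once that is set up, the rest is the same algebra as in Proposition \ref{frecuencia}.
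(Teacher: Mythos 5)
Your proposal follows essentially the same route as the paper: pull back $P$ to finitely many preimage patches of comparable diameter via Lemma \ref{semi-sliding-block-code}, use Lemma \ref{cota} and a canonical-selection induction to show $\freq(P)$ is exactly a bounded integer combination of frequencies of those preimage patches, then apply Proposition \ref{frecuencia} with a bounded shift in the exponent $k$. The one step you leave as a description rather than a construction --- the selection of a unique preimage patch per occurrence despite the $\varepsilon$-translation ambiguity --- is precisely what the paper implements via the ordering condition on nearby occurrences of the $P_i$ and the two injection lemmas, so your plan is correct and correctly locates the only real difficulty.
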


\begin{proof}
 Let $\T_2\in \Omega_{\T}$ and let $\T_1\in \Omega_{\T_0}$ be such that $\pi(\T_1)=\T_2$. Let $s_0>0$ be the constant of Lemma  \ref{semi-sliding-block-code}.

The linear recurrence of $\T_1$ implies
that  the tiling $\T_2$ is also linearly recurrent.  Let $L$  be the
constant of linear recurrence of $\T_1$ and let $M$ and $N$  be the constants
of Lemma \ref{cota} associated to $\T_1$ and $\T_2$ respectively. We set 
$$K=\max\{(8LN)^d, (8LM)^d \}$$
and
$$\eta_i=\max\{\diam(t): t \mbox{ is a tile in } \T_i\},  \mbox{ for } i\in \{1,2\}.$$ 
Let $\varepsilon>0$. Let $R_{\varepsilon}>0$ be the positive number associated to $\varepsilon$ 
as in Lemma \ref{semi-sliding-block-code}. Notice that  $R_{\varepsilon}$ can be chosen big enough in order that

\begin{equation}
\label{Repsilon}
R_{\varepsilon}\geq \max\left \{ \begin{array}{c}
                                  s_0+\eta_1+\eta_2+\varepsilon\\
                                  4N(2K+1)\varepsilon\\
                                   2M\varepsilon-s_0\\
                                   2(\eta_1+\varepsilon)-(s_0+\eta_2)\\
                                  \eta\lambda^{\lceil \log_{\lambda}\frac{2\eta_1}{\eta(\lambda-1)} \rceil}\\
                                  \eta\lambda^{\lceil \log_{\lambda}\frac{2(s_0+\eta_1+\eta_2+2\varepsilon)}{\eta(\lambda-1)} \rceil+2}\\
                                  \eta/2
                                  \end{array}\right.
\end{equation}
Let $R\geq R_{\varepsilon}$ and let $P=\T_2[B_R(y)]$, $y\in \RR^d$.

Suppose that $v_1,\cdots, v_l$ are all the points in
$B_{2L(R+s_0+\varepsilon+\eta_1+\eta_2)}(0)$ such that
$$\T_2[B_{R}(v_i)]=P.$$ If $v_i\neq v_j$ we have
$\|v_i-v_j\|>\frac{R}{N}$. This implies that in a ball of radius
$\frac{R}{2N}$ there is at most one point $v$ such that
$\T_2[B_{R}(v)]=P$. Using (\ref{Repsilon}) It follows that in
$B_{2L(R+s_0+\varepsilon+\eta_1+\eta_2)}(0)$ there are at most

$$\frac{\vol( B_{2L(R+s_0+\varepsilon+\eta_1+\eta_2)}(0)) }{\vol(B_{ \frac{R}{2N}}(0) ) }\leq (8LN)^d \leq K$$
points $v$ such that  $\T_2[B_{R}(v)]=P$.  
This implies that for any patch $P$ we have $l\leq K$. 

For every $1\leq i\leq l$ we set $$P_i=\T_1[B_{R+s_0+\eta_2}(v_i)].$$ Now, for every $1\leq i\leq l$ we will
define, by induction on $i$, $k_i$ different patches as follows (see figure \ref{figure2}).

For $i=1$, we take all the patches $P'$ in $\T_1$ satisfying the following two conditions:
\begin{eqnarray}
P'  & = & \T_1[B_{R+s_0+\eta_1+\eta_2+2\varepsilon}(v)]  \mbox{ for some $v\in \RR^d$}\\
P_1 & = &  \T_1[B_{R+s_0+\eta_2}(v)].
\end{eqnarray}
Because $\T_1$ satisfies the FPC, there exists a finite number $k_1$
of different patches satisfying the previous condition. We call
these patches $P_{1,1},\cdots, P_{1,k_1}$. Moreover, $k_1$ is
bounded by  $K$.  Indeed, if
$v$ and $v'$ are two different points in $\RR^d$ such that
\begin{eqnarray*}
P_{1,j} & =           & \T_1[B_{R+s_0+\eta_1+\eta_2+2\varepsilon}(v)]\\
P_{1,i} & =           & \T_1[B_{R+s_0+\eta_1+\eta_2+2\varepsilon}(v')],
\end{eqnarray*}
for some $1\leq i,j\leq k_1$, then
$$P_1=\T_1[B_{R+s_0+\eta_2}(v)]=\T_1[B_{R+s_0+\eta_2}(v')].$$ From
Lemma \ref{cota}, this implies that 
$$\|v-v'\|>\frac{R+s_0+\eta_2}{M}.$$
It follows that in a ball of radius 
$\frac{R+s_0+\eta_2}{2M}$ 
there is at most one point $w$ which is the center of some
$P_{1,j}$. Since $\T_1$ is linearly recurrent with constant $L$ and for every $1\leq j\leq k_1$
$$\diam(P_{1,j})\leq 2(R+s_0+\eta_1+\eta_2+2\varepsilon) +2\eta_1, $$
all the patches $P_{1,j}$ appear in the ball
$B_{2L(R+s_0+2\eta_1+\eta_2+2\varepsilon)}(0)$ in $\T_1$. Using (\ref{Repsilon}) this
implies 
$$
k_1\leq
\frac{\vol(B_{2L(R+s_0+2\eta_1+\eta_2+2\varepsilon)}(0))}{\vol(B_{\frac{R+s_0+\eta_2
}{2M}}(0))}\leq (8LM)^d \leq K.
$$


For $1<i\leq l$, we take all the patches $P'$ in $\T_1$ satisfying the following three conditions:
\begin{eqnarray}
P'  & = & \T_1[B_{R+s_0+\eta_1+\eta_2+2\varepsilon}(v)] \mbox{ for some $v\in \RR^d$},\\
P_i & = & \T_1[B_{R+s_0+\eta_2}(v)],
\end{eqnarray}
\begin{equation}
\label{mayores}
\mbox{ if }  \T_1[B_{R+s_0+\eta_2}(v+t)]=P_j   \mbox{ for some } t\in
B_{2\varepsilon}(0)  \mbox{ then }  j\geq i.
\end{equation}
As for the case $i=1$, we remark there is a finite number $k_i$ of
different
 patches satisfying the previous conditions, and that $k_i$ is smaller than $K$.  We call these patches $P_{i,1},\cdots, P_{i,k_i}$.

\begin{figure}
\input{patch2.pstex_t}
\caption{ }\label{figure2}
\end{figure}

\begin{remark}
\label{remark1}
 {\rm The linear recurrence of $\T_1$ and (\ref{Repsilon}) imply that if $v\in \RR^d$
satisfies 
$$\T_1[B_{R+s_0+\eta_1+\eta_2+2\varepsilon}(v)]=P_{i,j},$$ for some $1\leq i\leq l$ and $1\leq j\leq k_i$, then
$\T_1[B_{R+s_0+\eta_2}(v+t)]\neq P_i$ for every $t\in
B_{2\varepsilon}(0)\setminus\{0\}$.}
\end{remark}

\begin{remark}
\label{remark2}
{\rm From Remark
\ref{remark1}  and from (\ref{mayores}), if $v\in \RR^d$ satisfies 
$$\T_1[B_{R+s_0+\eta_1+\eta_2+2\varepsilon}(v)]=P_{i,j},$$ for some $1\leq i\leq l$
and $1\leq j\leq k_i$,
then $\T_1[B_{R+s_0+\eta_2}(v+t)]\neq P_s$ for every $1\leq s\leq i$
and $t\in B_{2\varepsilon}(0)\setminus \{0\}$.}
\end{remark}

\begin{remark}{\rm  From the construction of the patches $P_{i,j}$, if $v\in
\RR^d$  satisfies 
$$
\T_1[B_{R+s_0+\eta_2}(v)]=P_i,
$$
for some $1\leq i\leq l$ and, $j>i$  whenever  $\T_1[B_{R+s_0+\eta_2}(v+t)]=P_j$ for some  $t\in
B_{2\varepsilon}(0)\setminus\{0\}$, then
$$
\T_1[B_{R+s_0+\eta_1+\eta_2+ 2\varepsilon}(v)]=P_{i,k},
$$
for some $1\leq k\leq k_i$.}
\end{remark}

In the sequel we will show that $\freq(P)=\sum_{i=1}^{l}\sum_{j=1}^{k_i}\freq(P_{i,j})$.

\begin{lemma}
Let $v\in \RR^d$ be such that 
$$
\T_1[B_{R+s_0+\eta_1+\eta_2+2\varepsilon}(v)]=P_{i,j},
$$
for some $1\leq i\leq l$ and $1\leq j\leq k_i$.
Then there exists a point $w(v)  \in B_\epsilon (v)$ verifying 
$\T_2[B_{R}(w(v))]=P$
Moreover, if $v'\not =v$ then $w(v')  \neq w(v)$, and, 
\begin{equation}
\label{freqP1}
\sum_{i=1}^{l}\sum_{j=1}^{k_i}\freq(P_{i,j})\leq \freq(P).
\end{equation}
\end{lemma}

\begin{proof}
Consider
$v\in \RR^d$ such that

 $$\T_1[B_{R+s_0+\eta_1+\eta_2+2\varepsilon}(v)]=P_{i,j},$$
for some $1\leq i\leq l$ and $1\leq j\leq k_i$. Since
$\T_1[B_{R+s_0+\eta_2}(v)]=P_i$, we have
$$
(\T_1+v)\cap B_{R+s_0+\eta_2}(0)=(\T_1+v_i)\cap B_{R+s_0+\eta_2}(0).
$$
Thus from Lemma \ref{semi-sliding-block-code} we obtain that there exists
 $t\in B_{\varepsilon}(0)$ verifying
$$
(\T_2+v+t)\cap B_{R+\eta_2}(0)=(\T_2+v_i)\cap B_{R+\eta_2}(0),
$$
which implies that $\T_2[B_{R}(v+t)]=P$. Now, if $v'\in \RR^d$ is
another point such that
  $$\T_1[B_{R+s_0+\eta_1+\eta_2+2\varepsilon}(v')]=P_{i',j'},$$
  for some $1\leq i'\leq l$ and $1\leq j'\leq k_i'$, in a similar way we get that there exists
  $t'\in B_{\varepsilon}(0)$ satisfying $\T_2[B_{R}(v'+t')]=P$. Suppose that $v+t=v'+t'$. This implies that
  $\|v-v'\|<2\varepsilon$, i.e $v-v'\in B_{2\varepsilon}(0)$.
  But since
\begin{eqnarray*}
P_{i,j} & = & \T_1[B_{R+s_0+\eta_1+\eta_2+2\varepsilon}(v)],\\
P_i     & = & \T_1[B_{R+s_0+\eta_2}(v)], \\
P_{i'}  & = & \T_1[B_{R+s_0+\eta_2}(v+(v'-v))],
\end{eqnarray*}
the condition (\ref{mayores}) implies that $i'\geq i$. In the same
way we obtain that $i'\leq i$, which implies $i=i'$. Since
$2\varepsilon<\frac{R+s_0}{M}$, we get that $v'-v=0$. Hence we deduce that
it is possible to associate to each $v$ in $\RR^d$
which satisfies
$$\T_1[B_{R+s_0+\eta_1+\eta_2+2\varepsilon}(v)]=P_{i,j},$$ for some
$1\leq i\leq l$ and $1\leq j\leq k_i$, a point $w(v)\in \RR^d$
verifying $$\T_2[B_{R}(w(v))]=P,$$ and such that $w(v)\neq w(v')$ if
$v\neq v'$. Thus we deduce that
\begin{equation*}
\sum_{i=1}^{l}\sum_{j=1}^{k_i}\freq(P_{i,j})\leq \freq(P).
\end{equation*}
\end{proof}

\begin{lemma}
Let $v\in \RR^d$ be such that
$
\T_2[B_R(v)]=P.
$ 
Then there exists a point $ p(v)  \in B_{(2l+1)\epsilon} (v)$  verifying 
$$
\T_1[B_{R+s_0+\eta_1+\eta_2+2\varepsilon}(p(v))]=P_{i,j},
$$
for some $1\leq i\leq l$ and $1\leq j\leq k_i$.
Moreover, if $v'\not =v$ then $p(v')  \neq p(v)$, and, 
\begin{equation}
\label{freqP2}
\sum_{i=1}^{l}\sum_{j=1}^{k_i}\freq(P_{i,j})\geq \freq(P).
\end{equation}
\end{lemma}

\begin{proof}
Let $v\in \RR^d$ be such
that $$\T_2[B_R(v)]=P,$$  and consider
$$P'=\T_1[B_{R+s_0+\eta_2+\varepsilon}(v)].$$ Since $L$ is the
constant of linear recurrence of $\T_1$ and
$$
\diam(P')\leq 2(R+s_0+\eta_2+\varepsilon)+2\eta_1,
$$
there exists a translated  of $P'$ which support is included in the ball
$$B_{2L(R+s_0+\eta_1+\eta_2+\varepsilon)}(0).$$ In other words,
there exists $v'\in B_{2L(R+s_0+\eta_1+\eta_2+\varepsilon)}(0)$
such that the support of the patch $\T_1[[B_{R+s_0+\eta_2+\varepsilon}(v')]]$ is
contained in the ball
$B_{2L(R+s_0+\eta_1+\eta_2+\varepsilon)}(0)$   and  satisfies
\begin{eqnarray*}
P' & = & \T_1[B_{R+s_0+\eta_2+\varepsilon}(v')]\\
   & = & \T_1[B_{R+s_0+\eta_2+\varepsilon}(v)].
\end{eqnarray*}
This implies that
 $$(\T_1+v)\cap B_{R+s_0+\eta_2}(0)=(\T_1+v')\cap B_{R+s_0+\eta_2}(0).$$ So, from Lemma
 \ref{semi-sliding-block-code} there exists $t\in B_{\varepsilon}(0)$ verifying
$$
(\T_2+v'+t)\cap B_{R+\eta_2}(0)=(\T_2+v)\cap B_{R+\eta_2}(0).
$$
It follows that $\T_2[B_{R}(v'+t)]=P$ and, since
 $v'+t$ is in $B_{L(R+s_0+\eta_1+\eta_2+\varepsilon)}(0)$, we deduce that
 $v'+t=v_i$, for some $1\leq i\leq l$. Because
$\T_1[B_{R+s_0+\eta_2}(v'+t)]= P_i$ is included in
$\T_1[B_{R+s_0+\eta_2+\varepsilon}(v')]=P'$, we obtain that
$$\T_1[B_{R+s_0+\eta_2}(v+t)]=P_i.$$ 

\begin{figure}
\input{patch3.pstex_t}
\caption{ }\label{figure3}
\end{figure}

Now, we will  show that in the ball
$B_{(2l+1)\varepsilon}(v)$ there is a point $p(v)$ such that
$$\T_1[B_{R+s_0+\eta_1+\eta_2+2\varepsilon}(p(v))]=P_{m,j},$$ for
some $1\leq m\leq l$ and $1\leq j\leq k_m$. For that, consider the
following algorithm (see figure \ref{figure4}):

\begin{enumerate}
\item[Step $0$:]
We put $v_0=v+t$ and $i_0= i$.
\item[Step $1$:]
We have $\T_1[B_{R+s_0+\eta_2}(v_0)]=P_{i_0}$.\\
If $\T_1[B_{R+s_0+\eta_2}(v_0+s)]=P_j$ for some $s\in
B_{2\varepsilon}(0)$ implies $j\geq i_0$, then from the
definition of the patches $P_{i,k}$  we obtain that
$$\T_1[B_{R+s_0+\eta_1+\eta_2+2\varepsilon}(v_0)]=P_{i_0,m},$$ for some $m$ in $\{1,\cdots, k_{i_0}\}$.
\item[Step $2$:] If there exists $s\in B_{2\varepsilon}(0)$ such that
$\T_1[B_{R+s_0+\eta_2}(v_0+s)]=P_j$ with $j<i_0$, then we put
$$i_0=\min\{j: \exists    s\in B_{2\varepsilon}(0) \mbox{ such
that } \T_1[B_{R+s_0+\eta_2}(v_0+s)]=P_j\}.$$ If   $s\in
B_{2\varepsilon}(0)$ is  such that
$\T_1[B_{R+s_0+\eta_2}(v_0+s)]=P_{i_0}$ then we put $v_0=v_0+s$.
With these new values of $v_0$ and $i_0$ we go to the step $1$.

\end{enumerate}
This algorithm finishes in at most $l$ steps. The result is a point
$p(v)=v_0$ which distance to $v$ is  at most
$(2l+1)\varepsilon$ and such that $$\T_1[B_{R+s_0+\eta_1+\eta_2+2\varepsilon}(v_0)]=P_{i_0,m},$$  for some $m$ in $\{1,\cdots, k_{i_0}\}$.

\begin{figure}
\input{patch4.pstex_t}
\caption{}\label{figure4}
\end{figure}

If $w\in \RR^d$ is another point satisfying $\T_2[B_R(w)]= P$, we have
\begin{eqnarray*}
\frac{R}{N} & \leq & \|v-w\|\\ 
            & \leq & \|p(v)-v\|+\|p(v)-p(w)\|+\|p(w)-w\|\\
            & \leq &  2(2l+1)\varepsilon + \|p(v)-p(w)\|.
\end{eqnarray*}
 Thus we get 
$$0<\frac{R}{2N}<\frac{R}{N}-2(2l+1)\varepsilon\leq
\|p(v)-p(w)\|.$$  This implies it is possible to
associate to each $v$ in $\RR^d$  which satisfies $
\T_2[B_{R}(v)]=P$ a point $p(v)\in \RR^d$ verifying
$$\T_1[B_{R+s_0+\eta_1+\eta_2+2\varepsilon}(p(v))]=P_{i,j},$$ for
some $1\leq i\leq l$ and $1\leq j\leq k_i$, and such that $p(v)\neq
p(w)$ if $v\neq w$. Hence we deduce that
\begin{equation*}
\freq(P)\leq \sum_{i=1}^{l}\sum_{j=1}^{k_i}\freq(P_{i,j}).
\end{equation*}
\end{proof}

From (\ref{freqP1}) and (\ref{freqP2}) we get
\begin{equation}
\label{freqP3}
\freq(P)= \sum_{i=1}^{l}\sum_{j=1}^{k_i}\freq(P_{i,j}).
\end{equation}

As $R>\eta/2$, there exists $k>0$ such that
\begin{equation}
\label{nn}
\eta\lambda^{k-2}\leq 2(R+s_0+\eta_1+\eta_2+2\varepsilon) < \eta\lambda^{k-1}.
\end{equation}
Since $$2(R+s_0+\eta_1+\eta_2+2\varepsilon)\leq \diam(P_{i,j})
\leq 2(R+s_0+\eta_1+\eta_2+2\varepsilon)+2\eta_1$$
and $R\geq \eta\lambda^{\lceil \log_{\lambda}\frac{2\eta_1}{\eta(\lambda-1)} \rceil}$,  we have
$$
\eta\lambda^{k-2}\leq \diam(P_{i,j})<\eta\lambda^k.
$$
Hence, by Proposition  \ref{frecuencia}, we get
$$
\freq(P_{i,j})\in \left\{ \frac{f}{\lambda^{dk}},
\frac{f}{\lambda^{d(k-1)}}: f\in F\right\},
$$
where $F$ is the finite set of Proposition \ref{frecuencia}. Thus we obtain 
$$
\freq(P)=\frac{f}{\lambda^{dk}},
$$
where $f$ is an element in
$$
F'=\left\{\sum_{i=1}^Kf_i\ : f_i\in F\cup \lambda^dF, \, \forall \, 1\leq
i\leq K \right\},
$$
which is a finite subset of $\RR^d$.  

Notice that 
$$
2R\leq \diam(P) \leq 2(R+\eta_2).
$$
Thus from (\ref{nn}) we have
$$
\eta\lambda^{k-2}-2(s_0+\eta_1+\eta_2+2\varepsilon)\leq \diam(P)<\eta\lambda^{k-1},
$$
and by the choice of $R$ in (\ref{Repsilon}), we obtain
$$
\eta\lambda^{k-3}\leq \diam(P) < \eta\lambda^{k-1}.
$$
\end{proof}

\section{Proof of Theorem \ref{mainresult}}

From Proposition \ref{principal}, there exist two finite sets $F_1$ and
$F_2$ such that for  $R>0$ and $P=\T[B_R(0)]$ there exist $k_1$ and $k_2$ such that
$$
\freq(P)=\frac{f_1}{\lambda_1^{k_1}}=\frac{f_2}{\lambda_2^{k_2}},
$$
for some $f_1\in F_1$ and $f_2\in F_2$.

Because $F_1$ and $F_2$ are finite, we can find $a\in F_1$, $b\in
F_2$, $n_2> n_1$, $m_2>m_1$ and  patches $P_1$ and $P_2$ in $\T$
such that
$$
\freq(P_1)=\frac{a}{\lambda_1^{n_1}}=\frac{b}{\lambda_2^{m_1}},
$$
$$
\freq(P_2)=\frac{a}{\lambda_1^{n_2}}=\frac{b}{\lambda_2^{m_2}}.
$$
This implies that
$$
\lambda_1^{n_2-n_1}=\lambda_2^{m_2-m_1},
$$
which means that $\lambda_1$ and $\lambda_2$ are multiplicatively
dependent.

 \vspace{5mm}

{\bf Acknowledgment: } The authors acknowledge financial support
from  Nucleus Millennium PF04-069-F, Ecos-Conicyt program C03E03, Fondecyt de Iniciaci\'on 11060002 and thank 
the Laboratoire Ami\'enois de Math\'ematiques Fondamentales et
Appliqu\'ees of the Universit\'e de Picardie Jules Verne, Centro de Modelamiento Matem\'atico and the Departamento de Ingenier\'{\i}a
Matem\'atica of the Universidad de Chile where part
of this work has been done.

\end{document}